\documentclass[final]{siamltex}
\usepackage{latexsym,bm,amssymb,amsfonts,amsbsy,amsmath,graphics,graphicx,color}
\usepackage{url}
\urlstyle{tt}

\def\dd{\,\mathrm{d}}

\newtheorem{remark}{Remark}
\newtheorem{exa}{Example}

\def \RR {{\mathbb{R}}}

\def \pmatrix{ \left( \begin{array} }
\def \endpmatrix{ \end{array} \right) }

\def\d{\mathrm{d}}

\hyphenation{me-thod} \hyphenation{me-thods} \hyphenation{exact}
\hyphenation{ar-bi-tra-ri-ly} \hyphenation{dia-go-nal-ly}

\begin{document}

\title{A two step, fourth order, nearly-linear method with energy preserving properties\thanks{Work developed
within the project ``Numerical methods and software for
differential equations''.}}

\author{Luigi Brugnano\thanks{Dipartimento di Matematica
``U.\,Dini'', Universit\`a di Firenze, Italy ({\tt
luigi.brugnano@unifi.it}).} \and Felice
Iavernaro\thanks{Dipartimento di Matematica, Universit\`a di Bari,
Italy ({\tt felix@dm.uniba.it}).} \and Donato
Trigiante\thanks{Dipartimento di Energetica ``S.\,Stecco'', Universit\`a di
Firenze, Italy ({\tt trigiant@unifi.it}).}}

\maketitle

\begin{abstract}
We introduce a family of fourth order two-step methods that preserve
the energy function of canonical polynomial Hamiltonian systems.
Each method in the family may be viewed as a correction of a linear
two-step method, where the correction term is $O(h^5)$ ($h$ is the
stepsize of integration). The key tools the new methods are based
upon are the line integral associated with a conservative vector
field (such as the one defined by a Hamiltonian dynamical system)
and its discretization obtained by the aid of a quadrature formula.
Energy conservation is equivalent to the requirement that the
quadrature is exact, which turns out to be always the case in the
event that the Hamiltonian function is a polynomial and the degree
of precision of the quadrature formula is high enough. The
non-polynomial case is also discussed and a number of test problems
are finally presented in order to compare the behavior of the new
methods to the theoretical results.
\end{abstract}

\vspace{-1em}
\begin{keywords} Ordinary differential equations,
mono-implicit  methods, multistep methods, canonical Hamiltonian
problems, Hamiltonian Boundary Value Methods, energy preserving
methods, energy drift. \end{keywords}

\vspace{-1em}
\begin{AMS} 65L05, 65P10.\end{AMS}

\section{Introduction and Background}
We consider canonical Hamiltonian systems in the form
\begin{equation}\label{ham}
  \frac{dy}{dt}= J \nabla H(y), \qquad
  J=\pmatrix{cc}0&I_m\\-I_m&0\endpmatrix, \qquad y(t_0)=y_0\in\RR^{2m},\vspace{-.5em}
\end{equation}
where $H(y)$ is a smooth real-valued function. Our interest is in
researching numerical methods that
 provide approximations $y_n\simeq y(t_0+nh)$ to the true solution along which
 the energy is precisely conserved, namely
\begin{equation}
\label{energy-conservation} H(y_n)=H(y_0), \qquad \mbox{for all
stepsizes } h\le h_0.
\end{equation}

The study of energy-preserving methods form a branch of
\textit{geometrical numerical integration}, a research topic whose
main aim is preserving qualitative features of simulated
differential equations. In this context, symplectic methods have had
considerable attention due to their good long-time behavior as
compared to standard methods for ODEs \cite{Ru,Fe,LeRe}. A related
interesting approach based upon exponential/trigonometric fitting
may be found in \cite{IxVB04,VB06,Si08}. Unfortunately,
symplecticity cannot be fully combined with the energy preservation
property \cite{GM}, and this partly explains why the latter has been
absent from the scene for a long time.

Among the first examples of energy-preserving methods we mention
discrete gradient schemes \cite{Gon96,McL99} which are defined by
devising discrete analogs of the gradient function. The first
formulae in this class had order at most two but recently discrete
gradient methods of arbitrarily high order have been researched by
considering the simpler case of systems with one-degree of freedom
\cite{CieRat10,CieRat10a}.

Here, the key tool we wish to  exploit is the well-known line
integral associated with conservative vector fields, such us the one
defined at \eqref{ham}, as well as its  discrete version, the so
called \textit{discrete line integral}. Interestingly, the line
integral provides a means to check the energy conservation property,
namely
$$\begin{array}{rl}
H(y(t_1))-H(y_0) & = \displaystyle \int_{y_0 \rightarrow y(t_1)}
\hspace*{-.6cm} \nabla H(y) \d y =  h\int_0^1 y'(t_0+\tau h)^T
\nabla
H(y(t_0+\tau h ))  \d \tau \\[.35cm]  & = h\displaystyle \int_0^1 \nabla^T H(y(t_0+\tau h
)) J^T  \nabla H(y(t_0+\tau h )) \d \tau = 0,
\end{array}
$$
with $h=t_1-t_0$, that can be easily converted into a discrete
analog by considering a quadrature formula in place of the integral.

The discretization process requires to change the curve $y(t)$ in
the phase space $\RR^{2m}$  to a simpler curve $\sigma(t)$
(generally but not necessarily a polynomial), which is meant to
yield the approximation at time $t_1=t_0+h$, that is
$y(t_0+h)=\sigma(t_0+h)+O(h^{p+1})$, where $p$ is the order of the
resulting numerical method. In a certain sense, the problem of
numerically solving \eqref{ham} while preserving the Hamiltonian
function is translated into a quadrature problem.

For example,  consider the segment $\sigma(t_0+ch)=(1-c)y_0+cy_{1}$,
with $c\in[0,1]$, joining $y_0$ to an unknown point $y_1$ of the
phase space. The line integral of $\nabla H(y)$ evaluated along
$\sigma$ becomes
\begin{equation}
\label{iav-eq1} {H(y_{1})-H(y_0)} =  h(y_{1}-y_0)^T \int_0^1  \nabla
H((1-c)y_0+cy_{1})  \dd c.
\end{equation}
Now assume that $H(y)\equiv H(q,p)$ is a polynomial of degree $\nu$
in the generalized coordinates $q$ and in the momenta $p$. The
integrand in \eqref{iav-eq1} is a polynomial of degree $\nu-1$ in
$c$ and can be exactly solved by any quadrature formula with
abscissae $c_1<c_2<\cdots<c_k$ in $[0,1]$ and weights
$b_1,\dots,b_k$, having degree of precision $d \ge \nu-1$. We thus
obtain
$$
H(y_{1})-H(y_0) =h(y_{1}-y_0)^T {\sum_{i=1}^k b_i\nabla
H((1-c_i)y_0+c_iy_{1})}.
$$
To get the energy conservation property we impose that $y_1-y_0$ be
orthogonal to the above sum, and in particular we choose (for the
sake of generality we use $f(y)$ in place of $J\nabla H(y)$ to mean
that the resulting method also makes sense when applied to a general
ordinary differential equation $y'=f(y)$)
\begin{equation}
\label{iav-s-stage-trap}
y_{1}=\displaystyle y_0+h\sum_{i=1}^kb_if(Y_i), \qquad
Y_i=(1-c_i)y_0+c_iy_{1}, \quad i=1,\dots,k.
\end{equation}
Formula \eqref{iav-s-stage-trap}  defines a Runge--Kutta method with
Butcher tableau $\begin{array}{c|c} c & c b^T \\ \hline & b^T
\end{array}$, where $c$ and $b$ are the vectors of the abscissae and
weights, respectively. The stages $Y_i$ are called \textit{silent
stages} since their presence does not affect the degree of
nonlinearity of the system to be solved at each step of the
integration procedure: the only unknown is $y_1$ and consequently
\eqref{iav-s-stage-trap} defines a mono-implicit method.
Mono-implicit methods of Runge--Kutta type have been researched in
the past by several authors (see, for example,
\cite{Ca75,Bo,CaSi,BuChMu} for their use in the solution of initial
value problems).

Methods such as \eqref{iav-s-stage-trap} date back to 2007
\cite{IP1,IT3} and are called $k$-stage trapezoidal methods since on
the one hand the choice $k=2$, $c_1=0$, $c_2=1$ leads to the
trapezoidal method and on the other hand all other methods evidently
become the trapezoidal method when applied to linear problems.

Generalizations of \eqref{iav-s-stage-trap} to higher orders require
the use of a polynomial $\sigma$ of higher degree and are based upon
the same reasoning as the one discussed above. Up to now, such
extensions have taken the form of Runge--Kutta methods
\cite{BIT1,BIT2,BIT3}. It has been shown that choosing a proper
polynomial $\sigma$ of degree $s$ yields a Runge--Kutta method of
order $2s$ with $k\ge s$ stages. The peculiarity of such
energy-preserving formulae, called Hamiltonian Boundary Value
Methods (HBVMs), is that the associated Butcher matrix has rank $s$
rather than $k$, since $k-s$ stages may be cast as linear
combinations of the remaining ones, similarly to the stages $Y_i$ in
\eqref{iav-s-stage-trap}.\footnote{A documentation about HBVMs,
Matlab codes, and a complete set of references  is available at the
url \cite{BIT0}.} As a consequence,  the nonlinear system to be
solved at each step has dimension $2ms$ instead of $2mk$, which is
better visualized by recasting the method in block-BVM form
\cite{BIT1}.

In the case where $H(y)$ is not a polynomial, one can still get a
\textit{practical} energy conservation by choosing $k$ large enough
so that the quadrature formula approximates the corresponding
integral to within machine precision. Strictly speaking, taking the
limit as $k \rightarrow \infty$ leads to limit formulae where the
integrals come back into play in place of the sums. For example,
letting $k\rightarrow \infty$ in \eqref{iav-s-stage-trap} just means
that the integral in \eqref{iav-eq1} must not be discretized at all,
which would yield the \textit{Averaged Vector Field} method
$y_1=y_0+h\int_0^1 f((1-c)y_0+cy_{1}) \dd c$, (see \cite{M2AN,QMc}
for details).

In this paper we start an investigation that follows a different
route. Unlike the case with HBVMs, we want now to take advantage of
the previously computed approximations to extend the class
\eqref{iav-s-stage-trap} in such a way to increase the order of the
resulting methods, much as the class of linear multistep method may
be viewed as a generalization of (linear) one step methods. The
general question we want to address is whether there exist $k$-step
mono-implicit energy-preserving methods of order greater than two.
Clearly, the main motivation is to reduce the computational cost
associated with the implementation of HBVMs.

The purpose of the present paper is to give an affermative answer to
this issue in the case $k=2$. More specifically, the method
resulting from our analysis, summarized by formula \eqref{twostep},
may be thought of as a nearly linear two-step method in that it is
the sum of a fourth order linear two-step method, formula
\eqref{twostep-lin}, plus a nonlinear correction of higher order.

The paper is organized as follows. In Section \ref{def_methods} we
introduce the general formulation of the method, by which we mean
that the integrals are initially not discretized to maintain the
theory at a general level. In this section we also report a brief
description of the HBVM of order four, since its  properties will be
later exploited to deduce the order of the new method: this will be
the subject of Section \ref{analysis_sec}. Section \ref{discr_sec}
is devoted to the discretization of the integrals, which will
produce the final form of the methods making them ready for
implementation. A few test problems are presented in Section
\ref{test_sec} to confirm the theoretical results.

\section{Definition of the method}\label{def_methods}
Suppose that $y_1$ is an approximation to the true solution $y(t)$
at time $t_1=t_0+h$, where $h>0$ is the stepsize of integration.
More precisely, we assume that
\begin{itemize}
\item[($A_1$)] $y(t_1)=y_1+O(h^{p+1})$ with $p \ge 4$;
\item[($A_2$)] $H(y_1)=H(y_0)$, which means that $y_1$ lies on the very same
manifold $H(y)=H(y_0)$ as the continuous solution $y(t)$.
\end{itemize}
The two above assumptions are fulfilled if, for example, we compute
$y_1$ by means of a HBVM (or an $\infty$-HBVM \cite{BIT2}) of order
$p\ge 4$. The new approximation $y_2\simeq y(t_2) \equiv y(t_0+2h)$
is constructed as follows.

 Consider the quadratic polynomial
$\sigma(t_0+ 2 \tau h)$ that interpolates the set of data
$\{(t_0+jh, y_j)\}_{j=0,1,2}$. Expanded along the Newton basis
$\{P_j(\tau)\}$ defined on the nodes $\tau_0=0$,
$\tau_1=\frac{1}{2}$, $\tau_2=1$, the polynomial $\sigma$ takes the
form (for convenience we order the nodes as $\tau_0, \tau_2,
\tau_1$)
\begin{equation}
\label{sigma} \sigma(t_0 + 2 \tau h) = y_0+(y_2-y_0)\tau
+2(y_2-2y_1+y_0)\tau(\tau-1).
\end{equation}
As $\tau$ ranges in the interval $[0,1]$, the $2m$-length vector
$\gamma(\tau) \equiv \sigma(t_0+2\tau h)$ describes a curve in the
phase space $\RR^{2m}$. The line integral of the conservative vector
field $\nabla H(y)$ along the curve $\gamma$ will match the
variation of the energy function $H(y)$, that is
$$\begin{array}{rl}
H(y_2)-H(y_0) &= \displaystyle \int_{y_0 \rightarrow y_2}
\hspace*{-.6cm} \nabla H(y) \d y =  \int_0^1 \left[ \gamma'(\tau)
\right]^T  \nabla
H(\gamma(\tau)) \,  \d \tau \\[.5cm]
&  \hspace*{-1.5cm} \displaystyle =(y_2-y_0)^T \int_0^1 \nabla
H(\gamma(\tau)) \, \mathrm{d}\tau  + 2(y_2-2y_1+y_0)^T \int_0^1
(2\tau-1) \nabla H(\gamma(\tau)) \, \mathrm{d}\tau.
\end{array}
$$
The energy conservation condition $H(y_2)=H(y_0)$ yields the
following equation in the unknown $z\equiv y_2$
\begin{equation}
\label{nonlin} (z-y_0)^T \int_0^1 \nabla H(\gamma(\tau)) \,
\mathrm{d}\tau = -2(z-2y_1+y_0)^T \int_0^1 (2\tau-1) \nabla
H(\gamma(\tau)) \, \mathrm{d}\tau.
\end{equation}
The method we are interested in has the form $y_2=\Psi_h(y_0,y_1)$,
where $\Psi_h$ is  implicitly defined by the following nonlinear
equation in the unknown $z$:
\begin{equation}
\label{nonlin_sys}
\displaystyle z=y_0+ 2 h J a(z) + \frac{r(z)}{||a(z)||_2^2} a(z),
\qquad \mbox{with}~~ a(z)=\int_0^1  \nabla H (\gamma(\tau)) \, \d
\tau,
\end{equation}
where the residual $r(z)$ is defined as
\begin{equation}\label{residual}
r(z) \equiv -2(z-2y_1+y_0)^T \int_0^1 (2\tau-1) \nabla
H(\gamma(\tau)) \, \mathrm{d}\tau.
\end{equation}
A direct computation shows that any solution $z^\ast$ of
\eqref{nonlin_sys} also satisfies \eqref{nonlin}. In the next
section we will show that \eqref{nonlin_sys} admits a unique
solution $y_2\equiv z^\ast$ satisfying the order condition
$y_2=y(t_0+2h)+O(h^5)$. Such a result will be derived by regarding
\eqref{nonlin_sys} as a perturbation of the HBVM of order $4$ and,
in turn, by comparing the two associated numerical solutions. To
this end and to better explain the genesis of formula
\eqref{nonlin_sys} and the role of the integrals therein, a brief
introduction of the HBVM formula of order four is in order.

\subsection{HBVM of order four} \label{rem1} Suppose that  both $y_1$ and $y_2$
are unknown (so now  $y_1$ is no longer given a priori as indicated
by assumption ($A_1$)): let us call them $u_1$ and $u_2$
respectively. For \eqref{nonlin} to be satisfied, we can impose the
two orthogonality conditions
\begin{equation}
\label{orthbvm}\left\{
\begin{array}{l}
\displaystyle u_2 - y_0 = \eta_1 h J \int_0^1 \nabla H(\gamma(\tau))
\,
\mathrm{d}\tau, \\[.5cm]
\displaystyle  u_2-2u_1+y_0   = \eta_2 h J \int_0^1 (2\tau-1) \nabla
H(\gamma(\tau)) \, \mathrm{d}\tau,
\end{array}
\right.
\end{equation}
giving rise to a system of two block-equations (the curve
$\gamma(\tau)=\sigma(t_0+2\tau h)$ is as in \eqref{sigma} with $u_1$
and $u_2$ in place of $y_1$ and $y_2$). Setting the free constants
$\eta_1$ and $\eta_2$ equal to $2$ and $3$, respectively, confers
the highest possible order, namely $4$, on the resulting method:
$u_2=y(t_0+2h)+O(h^5)$ (see \cite{IT3} for details).\footnote{Since
we are integrating the problem on an interval $[t_0,t_2]$ of length
$2h$, we have scaled the constants $\eta_1$ and $\eta_2$ by a factor
two with respect to the values reported in \cite{IT3}.} Furthermore,
it may be shown that the internal stage $u_1$ satisfies the order
condition $u_1=y(t_0+h)+O(h^4)$.

Evidently, the implementation of \eqref{orthbvm} on a computer
cannot leave out of consideration the issue of solving the integrals
appearing in both equations. Two different situations may emerge:
\begin{itemize}
\item[(a)] the Hamiltonian function $H(y)$ is a polynomial of degree
$\nu$. In such a case, the two integrals in \eqref{orthbvm} are
exactly computed by a quadrature formula having degree of precision
$d \ge 2\nu-1$.
\item[(b)] $H(y)$ is not a polynomial, nor do the two integrands
admit a primitive function in closed form. Again, an appropriate
quadrature formula can be used to approximate the two integrals to
within machine precision, so that no substantial difference is
expected during the implementation process by replacing the
integrals by their discrete counterparts.
\end{itemize}

Case (a) gives rise to an infinite family of Runge-Kutta methods,
each depending on the specific choice (number and distribution) of
nodes the quadrature formula is based upon (see \cite{BIT2} for a
general introduction on HBVMs and \cite{BIT3} for their relation
with standard collocation methods). For example, choosing $k$ nodes
according to a Gauss distribution over the interval $[0,1]$ results
in a method that precisely conserves the energy if applied to
polynomial canonical Hamiltonian systems with $\nu \le k$ and that
becomes the classical $2$-stage Gauss collocation method when $k=2$.
On the other hand, choosing a Lobatto distribution yields a
Runge-Kutta method that preserves  polynomial Hamiltonian functions
of degree $\nu \le k-1$ and that becomes the Lobatto IIIA method of
order four when $k=2$.

The method resulting from case (b) are undistinguishable from the
original formulae \eqref{orthbvm} in that they are energy-preserving
up to machine precision when applied to any regular canonical
Hamiltonian system. Stated differently, \eqref{orthbvm} may be
viewed as the limit of the family of HBVMs of order four, as the
number of nodes tends to infinity. For this reason the limit
formulae \eqref{orthbvm} have been called $\infty$-HBVM of order $4$
(see \cite{BIT2}).

\begin{remark} In the present context,  $y_1$ being a known quantity, the unknown
$z$ in \eqref{nonlin} cannot in general satisfy, at the same time, both
orthogonality conditions in \eqref{orthbvm}. However, since $y_1$
may be thought of as an approximation of order four to the quantity
$u_1$ in \eqref{orthbvm}, should we only impose the first
orthogonality condition, namely
\begin{equation}
\label{orth1} z - y_0 = 2 h J a(z),
\end{equation}
we would expect the residual $r(z)$ (the right hand side of
\eqref{nonlin}) to be very small.\footnote{By exploiting the result
in Lemma~\ref{lem2} below, it is not difficult to show that actually
\eqref{orth1} implies $r(z)=O(h^5)$. This aspect is further
emphasized in the numerical test section (see Table \ref{tab2}).}
This suggests that a solution to \eqref{nonlin} that yields an
approximation of high order to $y(t_0+2h)$ may be obtained by
allowing a small deviation from orthogonality in \eqref{orth1}. This
is accomplished by setting $z - y_0 = 2 h J a(z) + \delta a(z)$, and
by tuning the perturbation parameter $\delta$ in such a way that
\eqref{nonlin} be satisfied: this evidently gives
$\delta=\frac{r(z)}{||a(z)||_2^2}$ and we arrive at
\eqref{nonlin_sys}.
\end{remark}

\section{Analysis of the method}
\label{analysis_sec}

Results on the existence and uniqueness of a solution of
\eqref{nonlin_sys} as well as on its order of accuracy will be
derived by first analyzing the simpler nonlinear system
\begin{equation}
\label{M}
\displaystyle z=y_0+ 2 h J a(z), \qquad \mbox{with}~~ a(z)=\int_0^1
\nabla H (\gamma(\tau)) \, \d \tau,
\end{equation}
obtained by neglecting the correction term
$\frac{r(z)}{||a(z)||_2^2} a(z)$. For $z\in \RR^{2m}$ we set (see
\eqref{sigma})
\begin{equation}
\label{gammaz} \gamma_z(\tau) = y_0+(z-y_0)\tau
+2(z-2y_1+y_0)\tau(\tau-1),
\end{equation}
and (see \eqref{M})
\begin{equation}
\label{Phi} \Phi(z)=y_0+ 2 h J a(z).
\end{equation}
In the following $||\cdot||$ will denote the $2$-norm.

\begin{lemma}
\label{lem1} There exist positive constants $\rho$ and $h_0$ such
that, for $h\le h_0$, system \eqref{M} admits a unique solution
$\hat z$ in the ball $B(y_0,\rho)$ of center $y_0$ and radius
$\rho$.
\end{lemma}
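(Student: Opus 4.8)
The plan is to prove existence and uniqueness by a standard fixed-point argument applied to the map $\Phi$ defined in \eqref{Phi}. The key observation is that $\Phi(z) - y_0 = 2hJ a(z)$, where $a(z) = \int_0^1 \nabla H(\gamma_z(\tau))\,\d\tau$. Since $H$ is smooth and $\gamma_z(\tau)$ depends affinely on $z$ (with coefficients that stay bounded as $z$ ranges over a fixed ball and $\tau \in [0,1]$), the function $a(z)$ is smooth, hence Lipschitz on any bounded set: there is a constant $L$ with $\|a(z_1) - a(z_2)\| \le L\|z_1 - z_2\|$ for $z_1,z_2 \in B(y_0,\rho)$. Likewise $\|a(z)\|$ is bounded by some $M$ on $B(y_0,\rho)$, because $\gamma_z(\tau)$ stays in a compact set and $\nabla H$ is continuous there. (Here one should note $\gamma_z$ also involves $y_1$, but $y_1 = y_0 + O(h)$ by assumption $(A_1)$, so for $h \le h_0$ the relevant points remain in a fixed neighborhood of $y_0$.)

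First I would fix $\rho > 0$ small enough that the closed ball $\overline{B(y_0,\rho)}$ lies in the region where $H$ is smooth, and obtain the bounds $M$ and $L$ above on that ball. Then for $h \le h_0 := \min\{\rho/(2M),\, 1/(4L)\}$ (shrinking $h_0$ further if needed so that $y_1 \in B(y_0,\rho)$), I claim $\Phi$ maps $\overline{B(y_0,\rho)}$ into itself and is a contraction there. Indeed, $\|\Phi(z) - y_0\| = 2h\|J a(z)\| \le 2hM \le \rho$ since $J$ is orthogonal, so $\Phi(\overline{B(y_0,\rho)}) \subseteq \overline{B(y_0,\rho)}$; and $\|\Phi(z_1) - \Phi(z_2)\| = 2h\|J(a(z_1)-a(z_2))\| \le 2hL\|z_1-z_2\| \le \tfrac12\|z_1-z_2\|$. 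The Banach fixed-point theorem then yields a unique fixed point $\hat z$ of $\Phi$ in $\overline{B(y_0,\rho)}$, which is precisely the unique solution of \eqref{M}; replacing $\rho$ by a slightly smaller radius (or noting the fixed point actually lies in the open ball since $\|\hat z - y_0\| \le 2hM < \rho$ for $h < h_0$) gives the statement as phrased.

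I do not expect any serious obstacle here — this is the routine contraction-mapping setup, and the presence of the small factor $2h$ in front of $a(z)$ is exactly what makes both the self-map and the contraction estimates work for $h$ small. The only point requiring a little care is the dependence of $\gamma_z$, hence of $a(z)$, on the fixed quantity $y_1$: one must invoke $(A_1)$ to keep $y_1$ within distance $\rho$ of $y_0$ so that all the curve values $\gamma_z(\tau)$ stay in the compact neighborhood of $y_0$ on which the Lipschitz and boundedness constants were derived; this forces a possible further shrinking of $h_0$ but causes no real difficulty. The nonlinear correction term $\frac{r(z)}{\|a(z)\|^2}a(z)$ in the actual method \eqref{nonlin_sys} is deliberately excluded from this lemma and will be handled as a perturbation in the subsequent analysis, so it plays no role in the present argument.
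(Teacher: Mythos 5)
Your argument is correct and follows essentially the same route as the paper: a contraction-mapping argument for the map $\Phi(z)=y_0+2hJa(z)$ on a ball around $y_0$, with the Lipschitz constant of $a$ obtained from smoothness of $H$ on a compact neighborhood (the paper computes it explicitly as $\tfrac14 M_\rho$ via the Jacobian $\int_0^1\nabla^2H(\gamma_z(\tau))\,\tau(2\tau-1)\,\d\tau$, and verifies the self-mapping property through the standard condition $\|\Phi(y_0)-y_0\|\le(1-L)\rho$ rather than your direct bound $2hM\le\rho$, but these are cosmetic differences). Your handling of the $h$-dependence of $y_1$ via assumption $(A_1)$ matches the paper's construction of the set $\Omega\subset B(y_0,\rho')$.
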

\begin{proof}
We show that constants $h_0,\rho>0$ exist such that the function
defined in \eqref{Phi} satisfies the following two conditions for $h
\le h_0$:
\begin{itemize}
\item[(a)]$\Phi(z)$ is a contraction on
$B(y_0,\rho)$, namely
$$
\forall z,w\in B(y_0,\rho),\quad ||\Phi(z)-\Phi(w)|| \le L ||z-w||,
\qquad \mbox{with} \quad L<1;
$$
\item[(b)] $||\Phi(y_0)-y_0||\le (1-L) \rho$.
\end{itemize}
The contraction mapping theorem can then be applied to obtain the
assertion.

Let $B(y_0,\rho)$ a ball centered at $y_0$ with radius $\rho$. We
can choose $h'_0$ and $\rho$ small enough that the image set
$\Omega=\{\gamma_z(\tau): \tau\in[0,1],~ z\in B(y_0,\rho),~h\le
h'_0\}$ is entirely contained in a ball $B(y_0,\rho')$ which, in
turn, is contained in the domain of $\nabla^2H(y)$.\footnote{Notice
that, by definition, the set $\Omega$ is an open simply connected
subset of $\RR^{2m}$ containing $B(y_0,\rho)$ while, from the
assumption ($A_1$), decreasing $h$ causes the point $y_1$ to
approach $y_0$.}  We set
$$
M_\rho = \max_{w\in B(y_0,\rho')} \left\|\nabla^2H(w) \right\|.
$$
 From \eqref{M} and \eqref{gammaz} we have
$$
\frac{\partial a(z)}{\partial z} = \int_0^1
\nabla^2H(\gamma_z(\tau)) \frac{\partial \gamma_z}{\partial z} \, \d
\tau = \int_0^1 \nabla^2H(\gamma_z(\tau)) \, \tau(2\tau-1) \, \d
\tau
$$
and hence
$$
\left\|\frac{\partial a(z)}{\partial z}\right\| \le M_\rho \int_0^1
\tau |2\tau-1| \, \d \tau =\frac{1}{4} M_\rho.
$$
Consequently (a) is satisfied by choosing
\begin{equation}
\label{Lcond}
L=\frac{h}{2}M_\rho
\end{equation}
 and $h_0<\min\{\frac{2}{M_\rho},h'_0\}$.
Concerning  (b), we observe that
$$
\Phi(y_0)-y_0 = 2hJa(y_0) =2h J \int_0^1 \nabla
H(y_0+4(y_0-y_1)\tau(\tau-1)) \, \d \tau,
$$
hence $||\Phi(y_0) -y_0|| = 2h ||a(y_0)||$ with $||a(y_0)||$ bounded
with respect to $h$. Since $L$ vanishes with $h$ (see
(\ref{Lcond})), we can always tune $h_0$ in such a way that
$2h||a(y_0)|| \le (1-L) \rho$.
\end{proof}

\begin{lemma}
\label{lem2} The solution $\hat z$ of \eqref{M} satisfies
$y(t+2h)-\hat z=O(h^5)$.
\end{lemma}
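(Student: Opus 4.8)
The plan is to compare $\hat z$ with the second endpoint $u_2$ produced by the order-four HBVM \eqref{orthbvm}, taking advantage of the fact, recalled in Section~\ref{rem1}, that $u_2=y(t_0+2h)+O(h^5)$. The starting observation is that $u_2$ satisfies the \emph{first} of the two block-equations in \eqref{orthbvm} (with $\eta_1=2$), hence it is a fixed point of the map
\begin{equation*}
\Phi^\ast(z)=y_0+2hJ\int_0^1\nabla H\big(\gamma^\ast_z(\tau)\big)\,\d\tau,\qquad \gamma^\ast_z(\tau)=y_0+(z-y_0)\tau+2(z-2u_1+y_0)\tau(\tau-1),
\end{equation*}
which is obtained from $\Phi$ simply by replacing the given quantity $y_1$ with the HBVM internal stage $u_1$. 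Since $u_1\to y_0$ as $h\to 0$ (exactly as $y_1$ does by $(A_1)$), the proof of Lemma~\ref{lem1} applies verbatim to $\Phi^\ast$: after possibly shrinking $h_0$ and $\rho$, $\Phi^\ast$ is a contraction of $B(y_0,\rho)$ with the same Lipschitz constant $L=\tfrac h2 M_\rho<1$, and its unique fixed point in that ball is $u_2$ — here one only needs that $u_2=y(t_0+2h)+O(h^5)=y_0+O(h)$ lies in $B(y_0,\rho)$ for $h$ small.

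Next I would estimate how far apart the two maps are on the ball. For every $w\in B(y_0,\rho)$ the curves $\gamma_w$ and $\gamma^\ast_w$ differ only through the divided-difference term, $\gamma_w(\tau)-\gamma^\ast_w(\tau)=4(u_1-y_1)\tau(\tau-1)$, so that $\|\gamma_w(\tau)-\gamma^\ast_w(\tau)\|\le\|u_1-y_1\|$ uniformly for $\tau\in[0,1]$; using that $\|J\|=1$ and the bound $M_\rho$ on $\nabla^2H$ over the relevant ball, a mean-value estimate gives
\begin{equation*}
\|\Phi(w)-\Phi^\ast(w)\|\le 2h\int_0^1\big\|\nabla H(\gamma_w(\tau))-\nabla H(\gamma^\ast_w(\tau))\big\|\,\d\tau\le 2hM_\rho\,\|u_1-y_1\|.
\end{equation*}
Now $u_1=y(t_0+h)+O(h^4)$ by the order property of the HBVM recalled above, while $y_1=y(t_0+h)+O(h^5)$ by $(A_1)$, whence $\|u_1-y_1\|=O(h^4)$ and therefore $\sup_{w\in B(y_0,\rho)}\|\Phi(w)-\Phi^\ast(w)\|=O(h^5)$.

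Finally I would invoke the standard perturbation bound for fixed points of contractions: if $\Phi,\Phi^\ast$ are contractions of $B(y_0,\rho)$ with constant $\le L$ and fixed points $\hat z$ and $u_2$, then
\begin{equation*}
\|\hat z-u_2\|=\|\Phi(\hat z)-\Phi^\ast(u_2)\|\le\|\Phi(\hat z)-\Phi^\ast(\hat z)\|+L\|\hat z-u_2\| \ \Longrightarrow\ \|\hat z-u_2\|\le\frac{1}{1-L}\sup_{B(y_0,\rho)}\|\Phi-\Phi^\ast\|.
\end{equation*}
Since $L=\tfrac h2 M_\rho\le\tfrac12$ for $h$ small, this yields $\|\hat z-u_2\|=O(h^5)$, and combining with $u_2=y(t_0+2h)+O(h^5)$ gives $y(t_0+2h)-\hat z=O(h^5)$, as claimed.

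As for the main difficulty: once the right comparison object ($u_2$, a fixed point of the perturbed map $\Phi^\ast$) is identified, the argument is routine. The one point that genuinely needs care is the bookkeeping in the middle step — making sure the discrepancy between $\Phi$ and $\Phi^\ast$ is truly $O(h^5)$ and not merely $O(h^4)$: what enters is not the $O(h)$ size of the curves themselves but only the $O(h^4)$ difference $u_1-y_1$, multiplied by the explicit factor $2h$ in front of the integral. One should also check that $h_0$ and $\rho$ can be chosen to serve $\Phi$ and $\Phi^\ast$ simultaneously and to contain $u_2$, which is immediate because both $y_1$ and $u_1$ tend to $y_0$ with $h$.
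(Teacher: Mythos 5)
Your proof is correct and follows essentially the same route as the paper: both compare $\hat z$ with the HBVM endpoint $u_2$, exploit that $\gamma$ and the HBVM curve differ only by $4(u_1-y_1)\tau(\tau-1)$ with $u_1-y_1=O(h^4)$, and gain the extra power of $h$ from the factor $2h$ in front of the integral before closing with the contraction property. The only cosmetic difference is that you package the key estimate as a uniform bound on $\sup\|\Phi-\Phi^\ast\|$ plus the standard fixed-point perturbation lemma, whereas the paper evaluates $\Phi$ directly at $u_2$ via a Taylor expansion to get $\Phi(u_2)=u_2+O(h^5)$ and then applies the Lipschitz bound once.
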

\begin{proof}
Under the assumption ($A_1$), \eqref{M} may be regarded as a
perturbation of system \eqref{orthbvm}, since  $y_1$ and $u_1$ are
$O(h^5)$ and $O(h^4)$ close to $y(t+h)$ respectively.\footnote{This
also implies that $u_1-y_1=O(h^4)$.} Since $u_2=y(t+2h)+O(h^5)$,  we
can estimate the accuracy of $\hat z$ as an approximation of
$y(t+2h)$ by evaluating its distance from $u_2$.

Let $\tilde \gamma(\tau)$ be the underlying quadratic curve
associated with the HBVM defined by \eqref{orthbvm}, namely
\begin{equation}
\label{tgamma} \tilde \gamma(\tau) \equiv y_0+(u_2-y_0)\tau
+2(u_2-2u_1+y_0)\tau(\tau-1).
\end{equation}
Considering that (see \eqref{gammaz})
$$
\gamma_{u_2}(\tau) \equiv y_0+(u_2-y_0)\tau
+2(u_2-2y_1+y_0)\tau(\tau-1) = \tilde
\gamma(\tau)+4(u_1-y_1)\tau(\tau-1),
$$
from the first equation in \eqref{orthbvm} and \eqref{Phi} we get
$$\begin{array}{rl}
\Phi(u_2) = & \displaystyle y_0+2hJ\int_0^1\nabla
H(\gamma_{u_2}(\tau)) \,\d \tau  = y_0+2hJ\int_0^1\nabla
H(\tilde \gamma(\tau)) \,\d \tau  \\[.4cm]
& \displaystyle  + 8hJ \int_0^1 \nabla^2 H(\tilde
\gamma(\tau))\tau(\tau-1) \,\d \tau \cdot (u_1-y_1) +
O(||u_1-y_1||^2) \\[.4cm]
=& \displaystyle u_2+O(h^5).
\end{array}
$$
If $h$ is small enough, $u_2$ will be inside the ball $B(y_0,\rho)$
defined in Lemma \ref{lem1}. The Lipschitz condition yields (see
\eqref{Lcond})
$$
||\hat z -u_2|| = ||\Phi(\hat z) -\Phi(u_2) +O(h^5)|| \le
\frac{h}{2}M_\rho ||\hat z-u_2|| +O(h^5),
$$
and hence $||\hat z -u_2|| =  O(h^5)||$.
\end{proof}

The above result states that \eqref{M} defines a method of order $4$
which is a simplified (non corrected) version of our conservative
method defined at \eqref{nonlin_sys}. In Section \ref{test_sec} the
behavior of these two methods will be compared on a set of test
problems. We now state the analogous results for system
\eqref{nonlin_sys}.

\begin{theorem}
\label{th1} Under the assumption ($A_1$), for $h$ small enough,
equation \eqref{nonlin_sys} admits a unique solution $z^\ast$
satisfying $y(t+2h)-z^\ast=O(h^5)$.
\end{theorem}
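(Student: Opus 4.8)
The plan is to treat \eqref{nonlin_sys} as a perturbation of \eqref{M} and to run a contraction-mapping argument on the perturbed fixed-point map, reusing as much of Lemmas \ref{lem1} and \ref{lem2} as possible. Concretely, define
$$
\Psi(z) = y_0 + 2hJa(z) + \frac{r(z)}{\|a(z)\|^2}\,a(z) = \Phi(z) + \frac{r(z)}{\|a(z)\|^2}\,a(z),
$$
on the ball $B(y_0,\rho)$ from Lemma \ref{lem1}, and show that for $h$ small enough $\Psi$ is still a contraction there with $\|\Psi(y_0)-y_0\|\le(1-L)\rho$, so that the contraction mapping theorem yields a unique fixed point $z^\ast$. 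The first step is to control the correction term: from \eqref{residual}, $r(z) = -2(z-2y_1+y_0)^T\int_0^1(2\tau-1)\nabla H(\gamma_z(\tau))\,\d\tau$; since $z\in B(y_0,\rho)$ and, by ($A_1$), $y_1=y_0+O(h)$, the vector $z-2y_1+y_0$ is $O(\rho+h)$ and the integral is bounded, so $r(z)$ is small. Meanwhile $\|a(y_0)\|$ is bounded and bounded away from zero for $\nabla H(y_0)\neq 0$ (the degenerate case $\nabla H(y_0)=0$ is an equilibrium and can be excluded or handled separately), and by continuity $\|a(z)\|$ stays bounded below on the ball once $\rho,h_0$ are small; hence $\frac{r(z)}{\|a(z)\|^2}a(z)$ is well-defined and small.

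Second, I would estimate the Lipschitz constant of the correction term. Differentiating, $\frac{\partial}{\partial z}\big(\tfrac{r(z)}{\|a(z)\|^2}a(z)\big)$ involves $\nabla r(z)$, $\partial a/\partial z$ (already bounded by $\tfrac14 M_\rho$ from the proof of Lemma \ref{lem1}), and $1/\|a(z)\|^2$ (bounded). The only genuinely new quantity is $\nabla r(z)$; since $r$ is an affine-in-$z$ expression composed with the smooth map $z\mapsto\gamma_z$, its gradient is bounded in terms of $M_\rho$ and the diameter of $\Omega$, uniformly for $h\le h_0$, and it is $O(\rho+h)$ in size because of the factor $z-2y_1+y_0$. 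Therefore the correction contributes a Lipschitz constant of the form $C(\rho+h)$, and choosing $\rho$ and then $h_0$ small enough makes $\Psi$ a contraction with constant, say, $L'=\tfrac{h}{2}M_\rho + C(\rho+h)<1$; shrinking $\rho$ also preserves $\|\Psi(y_0)-y_0\|\le \|\Phi(y_0)-y_0\| + (\text{small}) \le (1-L')\rho$ exactly as in Lemma \ref{lem1}. This gives existence and uniqueness of $z^\ast$ in the ball.

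Third, for the order estimate, I would exploit the footnote observation that the non-corrected solution $\hat z$ of \eqref{M} satisfies $r(\hat z)=O(h^5)$. Writing $\Psi(\hat z) = \Phi(\hat z) + \frac{r(\hat z)}{\|a(\hat z)\|^2}a(\hat z) = \hat z + O(h^5)$ (using $\Phi(\hat z)=\hat z$, that $r(\hat z)=O(h^5)$, and that $\|a(\hat z)\|$ is bounded below), and then applying the contraction estimate for $\Psi$ at the two points $z^\ast$ and $\hat z$,
$$
\|z^\ast-\hat z\| = \|\Psi(z^\ast)-\Psi(\hat z)+O(h^5)\| \le L'\|z^\ast-\hat z\| + O(h^5),
$$
we get $\|z^\ast-\hat z\|=O(h^5)$. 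Combining with Lemma \ref{lem2}, $y(t+2h)-\hat z=O(h^5)$, and the triangle inequality yields $y(t+2h)-z^\ast=O(h^5)$, which is the claim.

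The main obstacle I expect is establishing that $\frac{r(z)}{\|a(z)\|^2}a(z)$ is genuinely a small, Lipschitz perturbation \emph{uniformly} on a fixed ball — in particular that $\|a(z)\|$ does not collapse, which forces one to assume $\nabla H(y_0)\neq 0$ (i.e. to stay away from equilibria) and to quantify how small $\rho$ must be relative to $\|a(y_0)\|$; and cleanly bounding $\nabla r(z)$, which is the one derivative not already computed in the proof of Lemma \ref{lem1}. Everything else is a reprise of the two lemmas with an extra $O(h^5)$-small, $C(\rho+h)$-Lipschitz term folded in.
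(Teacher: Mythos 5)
Your overall strategy---treat \eqref{nonlin_sys} as a perturbation of \eqref{M}, show the perturbed map is still a contraction, and transfer the accuracy of $\hat z$ from Lemma \ref{lem2} to $z^\ast$ via $\|z^\ast-\hat z\|\le L'\|z^\ast-\hat z\|+O(h^5)$---is exactly the paper's. In two respects you are in fact more careful than the paper: you observe that $\|a(z)\|$ must be bounded away from zero (which forces the standing hypothesis $\nabla H(y_0)\neq 0$, never stated explicitly in the paper), and you actually sketch a Lipschitz bound for the correction term $R(z,h)=\frac{r(z)}{\|a(z)\|^2}a(z)$, which the paper merely asserts to be compatible with contraction.

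The genuine gap is the step you outsource: $r(\hat z)=O(h^5)$. The footnote you cite does not prove this claim---it says ``it is not difficult to show'' and defers to Lemma \ref{lem2}---and the place where it is actually established is precisely the proof of Theorem \ref{th1}, so invoking it is circular. Moreover, this is not a routine size estimate of the kind you use in your first step: bounding $\hat z-2y_1+y_0$ (which is a second difference, hence $O(h^2)$) against the bounded integral $\int_0^1(2\tau-1)\nabla H(\gamma_{\hat z}(\tau))\,\d\tau$ (which is $O(h)$, since $\int_0^1(2\tau-1)\,\d\tau=0$) only yields $r(\hat z)=O(h^3)$, and hence $z^\ast=\hat z+O(h^3)$, i.e.\ a method of order two, not four. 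The two missing powers of $h$ come from comparing with the order-four HBVM pair $(u_1,u_2)$ of \eqref{orthbvm}: one writes $\hat z-2y_1+y_0=(u_2-2u_1+y_0)+O(h^4)$ and $\gamma_{\hat z}=\tilde\gamma+O(h^4)$ (using Lemma \ref{lem2} and $u_1-y_1=O(h^4)$), and then the leading term $(u_2-2u_1+y_0)^T\int_0^1(2\tau-1)\nabla H(\tilde\gamma(\tau))\,\d\tau$ vanishes \emph{exactly}, because the second orthogonality condition in \eqref{orthbvm} makes it a multiple of $v^TJ v$ with $J$ skew-symmetric; the surviving cross terms are then $O(h^5)$ or smaller. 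Without identifying this cancellation mechanism the proof does not deliver the stated order; supplying that computation closes the argument.
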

\begin{proof}
Consider the solution $\hat z$ of system \eqref{M}. We have (see
\eqref{tgamma})
$$
\gamma_{\hat z}(\tau)-\tilde \gamma(\tau)=(\hat z -u_2)\tau(2\tau-1)
+4(u_1-y_1)\tau(\tau-1)=O(h^5),
$$
and
$$
\hat z-2y_1+y_0=u_2-2u_1+y_0 +O(h^5).
$$
Hence, by virtue of \eqref{orthbvm},
$$
r(\hat z) = -2\left[(u_2-2u_1+y_0) +O(h^5)\right]^T \left[\int_0^1
(2\tau-1)\nabla H(\tilde \gamma(\tau)) \, \d \tau
+O(h^5)\right]=O(h^5).
$$
Since $a(\hat z)$ is bounded with respect to $h$, it follows that,
in a neighborhood of $\hat z$, system \eqref{nonlin_sys} may  be
regarded as a perturbation of system \eqref{M}, the perturbation
term being $R(z,h)\equiv \frac{r(z)}{||a(z)||_2^2}a(z)$.

Consider the ball $B(\hat z, R(\hat z, h))$: since $\hat
z=y_0+O(h)$, and $R(\hat z, h)=O(h^5)$, this ball is contained in
$B(y_0, \rho)$ defined in Lemma \ref{lem1} and the perturbed
function $\Phi(z)+R(z,h)$ is a contraction therein, provided $h$ is
small enough. Evaluating the right-hand side of \eqref{nonlin_sys}
at $z=\hat z$ we get
$$
y_0+2h J a(\hat z) + R(\hat z, h) = \hat z + R(\hat z, h),
$$
which means that property (b) listed in the proof of Lemma
\ref{lem1}, with $\hat z$ in place of $y_0$, holds true for the
perturbed function $y_0+2h J a(z) + R(z, h)$, and the contraction
mapping theorem may be again exploited to deduce the assertion.
\end{proof}

\section{Discretization}
\label{discr_sec} As was stressed in Section \ref{def_methods},
formula \eqref{nonlin_sys} is not operative unless a technique to
solve the two integrals is taken into account. The most obvious
choice is to compute the integrals by means of a suitable quadrature
formula which may be assumed  exact in the case where the
Hamiltonian function is a polynomial, and to provide an
approximation to within machine precision in all other cases.

Hereafter we assume that $H(q,p)$ is a polynomial in $q$ and $p$ of
degree $\nu$. Since $\gamma(\tau)$ has degree two, it follows that
the integrand functions appearing in the definitions of $a(z)$ and
$r(z)$ at \eqref{nonlin_sys} and \eqref{residual} have degree
$2\nu-2$ and $2\nu-1$ respectively and can be solved by any
quadrature formula with abscissae $c_1<c_2<\cdots<c_k$ in $[0,1]$
and weights $b_1,\dots,b_k$, having degree of precision $d \ge
2\nu-1$. In place of \eqref{nonlin_sys} we now consider the
equivalent form suitable for implementation
\begin{equation}
\label{twostep} \displaystyle y_2=y_0+ 2 h J \sum_{i=1}^kb_i\nabla
H(\gamma(c_i)) + G(y_0,y_1,y_2),
\end{equation}
where
$$
G(y_0,y_1,y_2) = \frac{-2(y_2-2y_1+y_0)^T \sum_{i=1}^k b_i (2c_i-1)
\nabla H(\gamma(c_i))}{\| \sum_{i=1}^kb_i\nabla H(\gamma(c_i))
\|_2^2} \, \sum_{i=1}^kb_i\nabla H(\gamma(c_i)).
$$
Notice that from \eqref{sigma} we get
\begin{equation}
\label{lin-comb}
\gamma(c_i)=(1-3c_i+2c_i^2)y_0+4c_i(1-c_i)y_1+c_i(2c_i-1)y_2,
\end{equation}
that is, $\gamma(c_i)$ is a linear combination, actually a weighted
average, of the approximations $y_0$, $y_1$ and $y_2$. Therefore,
since $G(y_0,y_1,y_2) = O(h^5)$ (see Lemma~\ref{lem2} and
Theorem~\ref{th1}), we may look at this term as a nonlinear
correction of the generalized linear multistep method
\begin{equation}
\label{twostep-lin} \displaystyle y_2=y_0+ 2 h J
\sum_{i=1}^kb_i\nabla H(\gamma(c_i)).
\end{equation}
\begin{exa}
If $H(q,p)$ is quadratic, we can choose $k=3$, $c_1=0$,
$c_2=\frac{1}{2}$, $c_3=1$, $b_1=b_3=\frac{1}{6}$ and
$b_2=\frac{2}{3}$, that is we can use Simpson's quadrature formula
to compute the integrals in \eqref{nonlin_sys} and \eqref{residual}.
Since, in such a case, $\gamma(c_i)=y_{i-1}$, method
\eqref{twostep-lin} becomes
$$
\displaystyle y_2=y_0+ \frac{h}{3} J \left(\nabla H(y_0)+4 \nabla
H(y_1)+\nabla H(y_2) \right),
$$
that is, the standard Milne-Simpson's method.
\end{exa}

In all other cases $\gamma(c_i)$ will differ in general from $y_j$,
$j=1,2,3$ and may be regarded as an off-point entry in formula
\eqref{twostep-lin}. In the sequel we will denote the method defined
at \eqref{twostep} by $M_k$ and its linear part, defined at
\eqref{twostep-lin}, by $M'_k$. Of course, the choice of the
abscissae distribution influences the energy preserving properties
of the method $M_k$, as is indicated in Table
\ref{nodes-distribution-table}.
\begin{table}[htb]
\begin{center}
{\renewcommand{\arraystretch}{1.5}
\begin{tabular}{|cccc|}
\hline {Abscissae distribution:} & uniform & Lobatto & Gauss \\
\hline { Energy preserving when:} &$\deg H \le \lceil \frac{k}{2}
\rceil$ & $ \deg H \le k-1$ & $ \deg H \le k$ \\ \hline
\end{tabular}}
\end{center}
\caption{Energy preserving properties of method $M_k$ for some
well-known distributions of the nodes $\{c_i\}$.}
\label{nodes-distribution-table}
\end{table}


\section{Numerical tests}
\label{test_sec} Hereafter we implement the order four method $M_k$
on  a few Hamiltonian problems to show that the numerical results
are consistent with the theory presented in Section
\ref{analysis_sec}. In particular, in the first two problems the
Hamiltonian function is a polynomial of degree three and six
respectively, while the last numerical test reports the behavior of
the method on a non-polynomial problem.

Each step of the integration procedure requires the solution  of a
nonlinear system, in the unknown $y_2$, represented by
\eqref{twostep} for the method $M_k$ and \eqref{twostep-lin} for the
method $M'_k$. The easiest way (although not the most efficient one)
to find out a solution is by means of fixed point iteration that, in
the case of the method $M_k$, reads
\begin{equation}
\label{iteration} z_{s+1}=y_0+ 2 h J \sum_{i=1}^kb_i\nabla
H(\gamma_{z_s}(c_i)) + G(y_0,y_1,z_s),\qquad s=1,2,\dots,
\end{equation}
where $\gamma_z$ is defined at \eqref{gammaz} and $z_0$ is an
initial approximation of $y_2$ which is then refined by setting
$y_2=z_{\bar s}$ with $z_{\bar s}\simeq \lim_{s\rightarrow \infty}
z_s$. From Theorem \ref{th1} and the preceding lemmas we deduce that
such a limit always exists provided that $h$ is small enough. The
value of $z_0$ could be retrieved via an extrapolation based on the
previous computed points or by considering the method $M'_k$ as a
predictor for $M_k$.

We will consider a Lobatto distribution with an odd number $k$ of
abscissae $\{c_i\}$. In fact, if $k$ is odd, since
$y_0=\gamma(0)=\gamma(c_1)$ and
$y_1=\gamma(\frac{1}{2})=\gamma(c_{\lceil \frac{k}{2}\rceil})$, we
save two function evaluations during the iteration
\eqref{iteration}.

\subsection{Test problem 1}
The Hamiltonian function
\begin{equation}
\label{cubic_pendulum}
H(q,p)=\frac{1}{2}p^2+\frac{1}{2}q^2-\frac{1}{6}q^3
\end{equation}
defines the cubic pendulum equation. We can solve it by using five
Lobatto nodes to discretize the integrals in \eqref{nonlin_sys},
thus getting the method $M_5$. The corresponding numerical solution,
denoted by $(q_n,p_n)$, is plotted in Figure
\ref{cubic_pendulum_fig1}. For comparison purposes we also compute
the numerical solution $(q'_n, p'_n)$ provided by the fourth order
method, say $M'_5$, obtained by neglecting in \eqref{nonlin_sys} the
correction term, that is  by posing $r(z)\equiv 0$. Figure
\ref{cubic_pendulum_fig2} clearly shows the energy conservation
property, while Table \ref{tab1} summarizes the convergence
properties of the two methods.

\begin{figure}[h]
\begin{center}
\includegraphics[width=6.7cm, height=5cm]{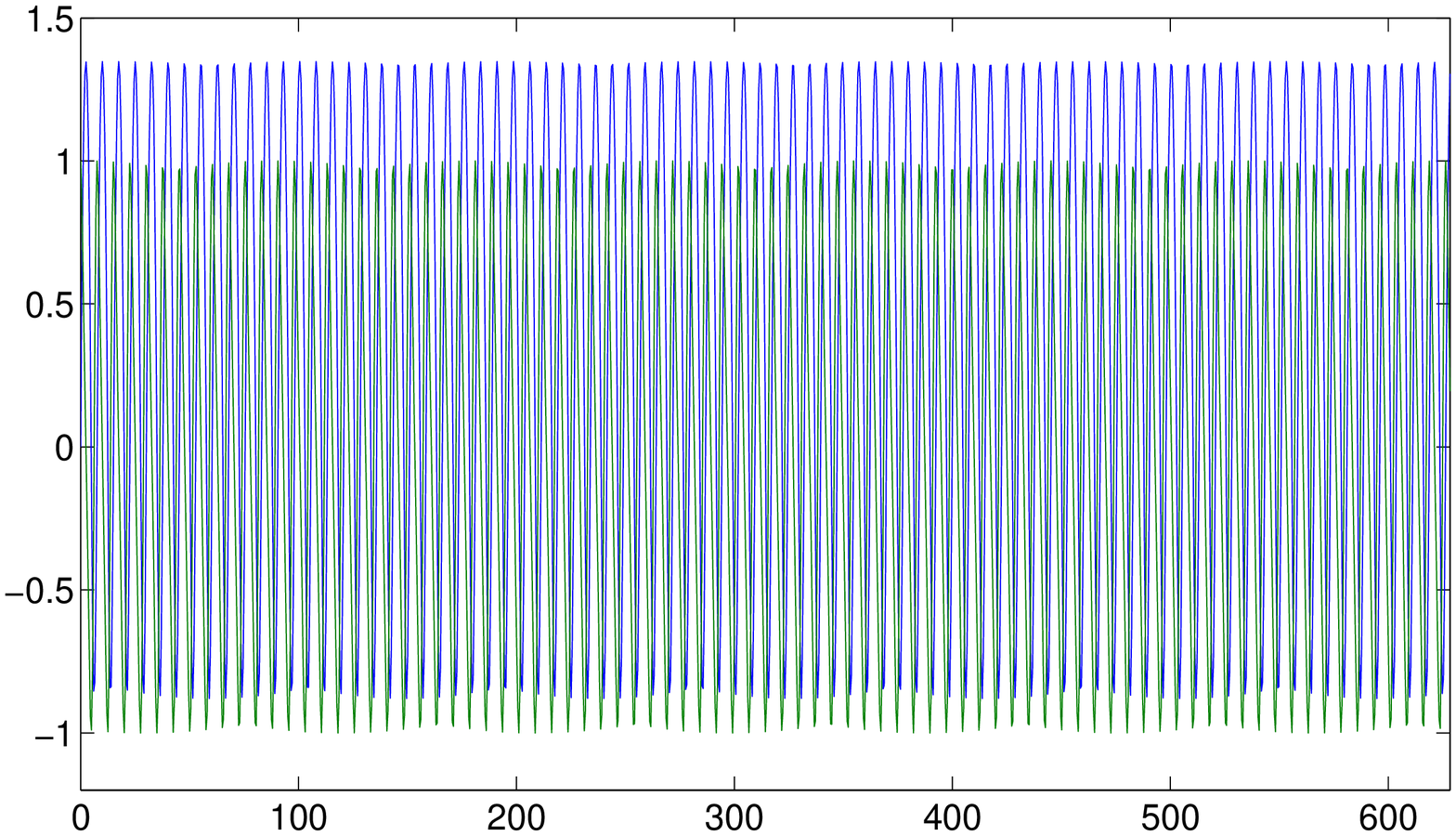}
\hspace*{-.6cm}
\includegraphics[width=6.7cm, height=5cm]{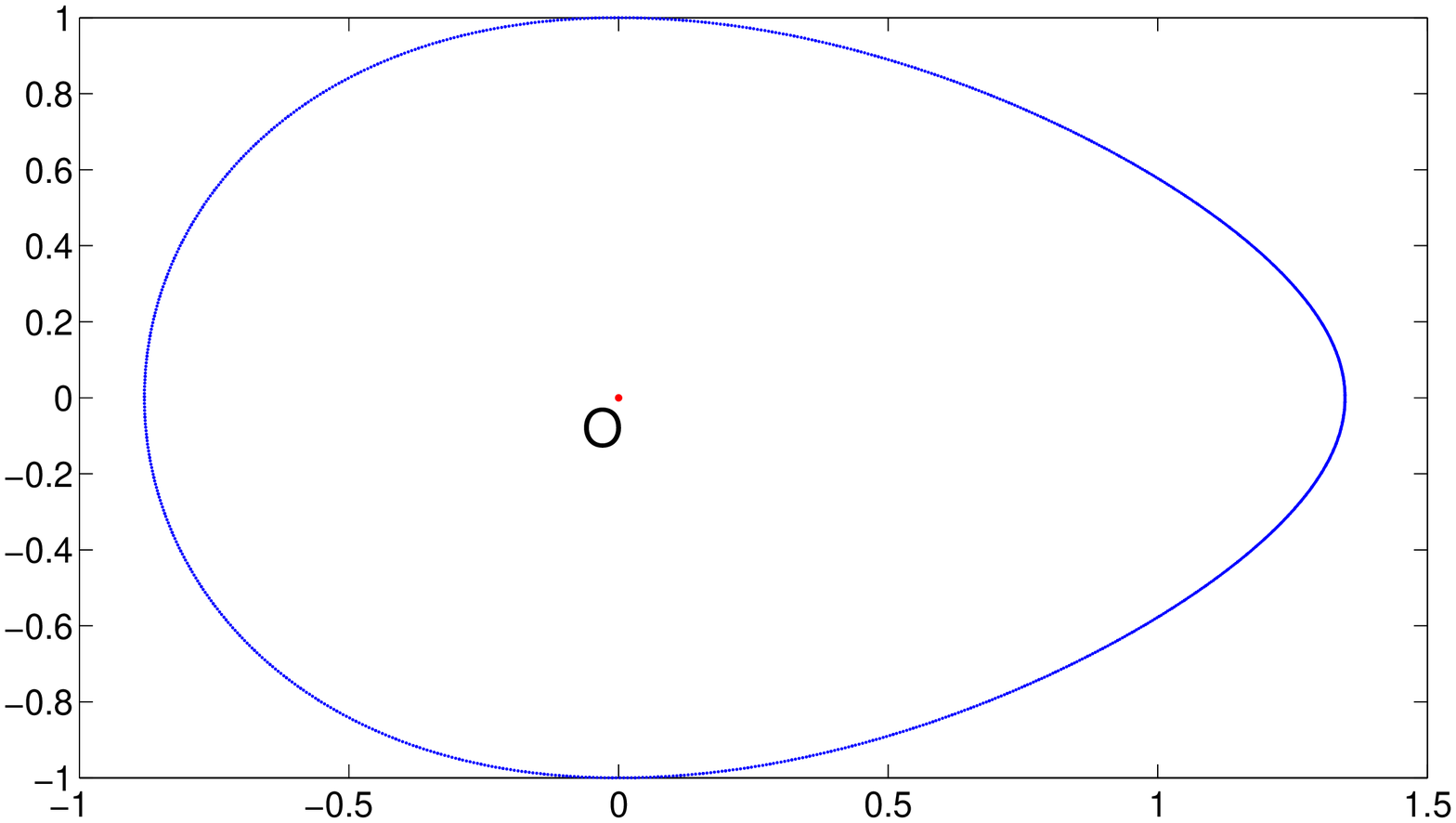}
\end{center}
\vspace*{-.5cm} \caption{Numerical solution $(q_n, p_n)$ versus time
$t_n$ (left picture) and on the phase plane (right picture).
Parameters: initial condition $y_0=[0,\,1]$; stepsize $h=0.5$;
integration interval $[0, 200 \pi]$.} \label{cubic_pendulum_fig1}
\end{figure}
\begin{figure}[h]
\begin{center}
\includegraphics[width=10cm, height=5cm]{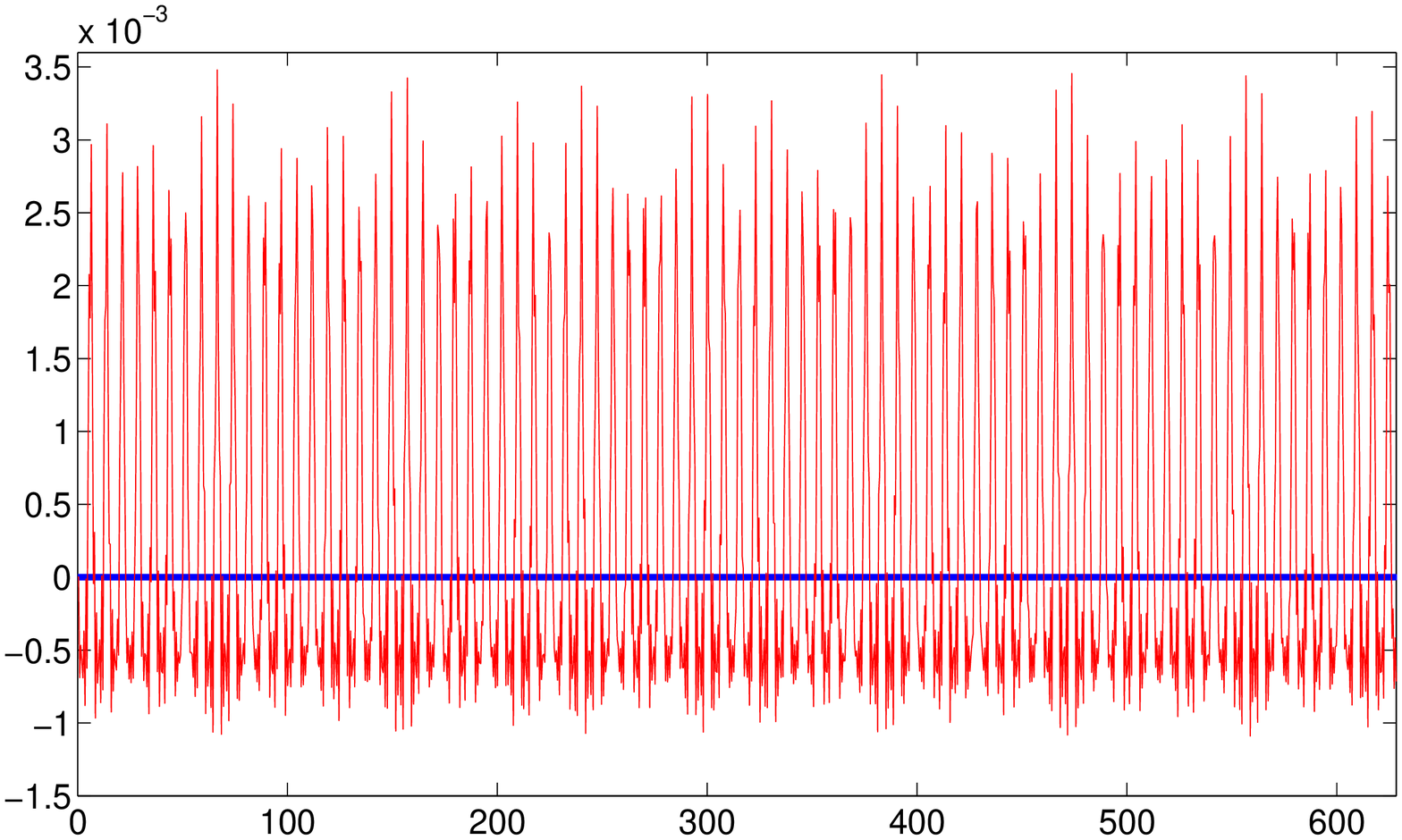}
\end{center}
\vspace*{-.5cm} \caption{Hamiltonian function evaluated along the
numerical solution $(p_n,q_n)$ (horizontal line) and along the
numerical solution $(p'_n, q'_n)$ (irregularly oscillating line).}
\label{cubic_pendulum_fig2}
\end{figure}

\begin{table}[h]
\label{tab1}{\footnotesize
\begin{center}
\begin{tabular}{c|c|c|c|c|c|c|}
\cline{2-7} & \multicolumn{3}{c|}{method $M_5$} & \multicolumn{3}{c|}{method $M'_5$} \\
\hline
\multicolumn{1}{|c|}{$h$}  & error  & order  & {\tiny $\max |H(y_n)-H(y_0)|$} & error  & order & {\tiny $\max |H(y'_n)-H(y_0)|$} \\
\hline
\multicolumn{1}{|c|}{$1$}      &   $3.1\cdot 10^{-2}$  &           & $2.5\cdot 10^{-15}$ & $1.1\cdot 10^{-1}$&         & $1.1008\cdot 10^{-1}$ \\[.1cm]
\multicolumn{1}{|c|}{$2^{-1}$} &   $3.8\cdot 10^{-4}$  & $6.373$   & $1.9\cdot 10^{-15}$ & $3.1\cdot 10^{-3}$& $5.183$ & $2.9680\cdot 10^{-3}$ \\[.1cm]
\multicolumn{1}{|c|}{$2^{-2}$} &   $2.6\cdot 10^{-5}$  & $3.866$   & $1.5\cdot 10^{-15}$ & $2.5\cdot 10^{-4}$& $3.655$ & $1.5755\cdot 10^{-4}$ \\[.1cm]
\multicolumn{1}{|c|}{$2^{-3}$} &   $1.6\cdot 10^{-6}$  & $4.059$   & $8.8\cdot 10^{-16}$ & $1.8\cdot 10^{-5}$& $3.811$ & $8.5163\cdot 10^{-6}$ \\[.1cm]
\multicolumn{1}{|c|}{$2^{-4}$} &   $9.5\cdot 10^{-8}$  & $4.032$   & $9.9\cdot 10^{-16}$ & $1.2\cdot 10^{-6}$& $3.905$ & $4.8883\cdot 10^{-7}$ \\[.1cm]
\multicolumn{1}{|c|}{$2^{-5}$} &   $5.9\cdot 10^{-9}$  & $4.017$   & $1.1\cdot 10^{-15}$ & $7.6\cdot 10^{-8}$& $3.952$ & $2.9131\cdot 10^{-8}$ \\[.1cm]
\multicolumn{1}{|c|}{$2^{-6}$} &   $3.6\cdot 10^{-10}$ & $4.008$   & $1.1\cdot 10^{-15}$ & $4.9\cdot 10^{-9}$& $3.976$ & $1.7771\cdot 10^{-9}$ \\[.1cm]
\multicolumn{1}{|c|}{$2^{-7}$} &   $2.3\cdot 10^{-11}$ & $4.004$   & $2.3\cdot 10^{-15}$ & $3.1\cdot 10^{-10}$&$3.988$ & $1.0968\cdot 10^{-10}$ \\[.1cm]
\multicolumn{1}{|c|}{$2^{-8}$} &   $1.4\cdot 10^{-12}$ & $4.006$   & $2.4\cdot 10^{-15}$ & $1.9\cdot 10^{-11}$&$3.994$ & $6.8121\cdot 10^{-12}$ \\
 \hline
\end{tabular}\end{center}}
\vspace*{.1cm} \caption{Methods $M_5$ (with correction term) and
$M'_5$ (without correction term) are implemented on the cubic
pendulum equation \eqref{cubic_pendulum} on the time interval $[0,
10]$ for several values of the stepsize $h$. The order of
convergence is numerically evaluated by means of the formula $\log_2
\frac{\mbox{\rm error}(\frac{h}{2})}{\mbox{\rm error}(h)}$. As was
expected, the maximum displacement of the numerical Hamiltonian
$H(y_n)$ from the theoretical value $H(y_0)$ is close to the machine
precision for the method $M_5$, independently of the stepsize $h$
used.}
\end{table}

\subsection{Test problem 2}
The Hamiltonian function
\begin{equation}
\label{fhp} H(p,q)= \frac{1}{3}p^3- \frac{1}{2}p
+\frac{1}{30}q^6+\frac{1}{4}q^4-\frac{1}{3}q^3+\frac{1}{6}
\end{equation}
has been proposed in \cite{FaHaPh} to show that symmetric methods
may suffer from the energy drift phenomenon even when applied to
reversible systems, that is when $H(-p,q)=H(p,q)$.\footnote{In fact,
the authors show that the system deriving from \eqref{fhp}  is
equivalent to a reversible system (see also \cite{BrTr0, McPe} for a
discussion on the integration of reversible Hamiltonian systems by
symmetric methods).} For our experiment, we will use $y_0=[0.2,\,
0.5]$ as initial condition.

Since  $\deg(H(q,p))=6$, we need a Lobatto quadrature based on at
least seven nodes to assure that the integrals in \eqref{nonlin_sys}
are computed exactly. Therefore we solve \eqref{fhp} by method
$M_7$. For comparison purposes, it is also interesting to  show the
dynamics of the symmetric non-conservative method $M'_7$. Figure
\ref{fhp_fig1} displays the results obtained by the two methods
implemented with stepsize $h=\frac{1}{10}$ over the interval
$[0,\,10^3]$. In particular, the numerical trajectories  generated
by method $M'_7$ and $M_7$,  are reported in the left-top and
left-bottom pictures respectively, while the right picture  reports
the corresponding error in the Hamiltonian function evaluated along
the two numerical solutions, namely $|H(y_n)-H(y_0)|$.

Evidently, the numerical solution produced by $M'_7$ rapidly departs
from the level curve $H(q,p)=H(q_0,p_0)$ but it remain eventually
bounded and the points $(q_n,p_n)$ seem to densely fill a bounded
region of the phase plane.

On the contrary, since the degree of freedom of the present problem
is one, the points $(q_n,p_n)$ produced by $M_7$ lie on the very
same continuous trajectory covered by $y(t)$: this is also confirmed
by looking at the bottom graph in the right picture.

Table \ref{tab2} shows the  behavior of method $M_7$ applied to
problem \eqref{fhp} as the stepsizes $h$ goes to zero. Notice the
$O(h^5)$ rate of convergence to zero for the residual function
$r(z)$   in \eqref{residual}.

\begin{figure}[h]
\begin{center}
\includegraphics[width=6.7cm, height=5cm]{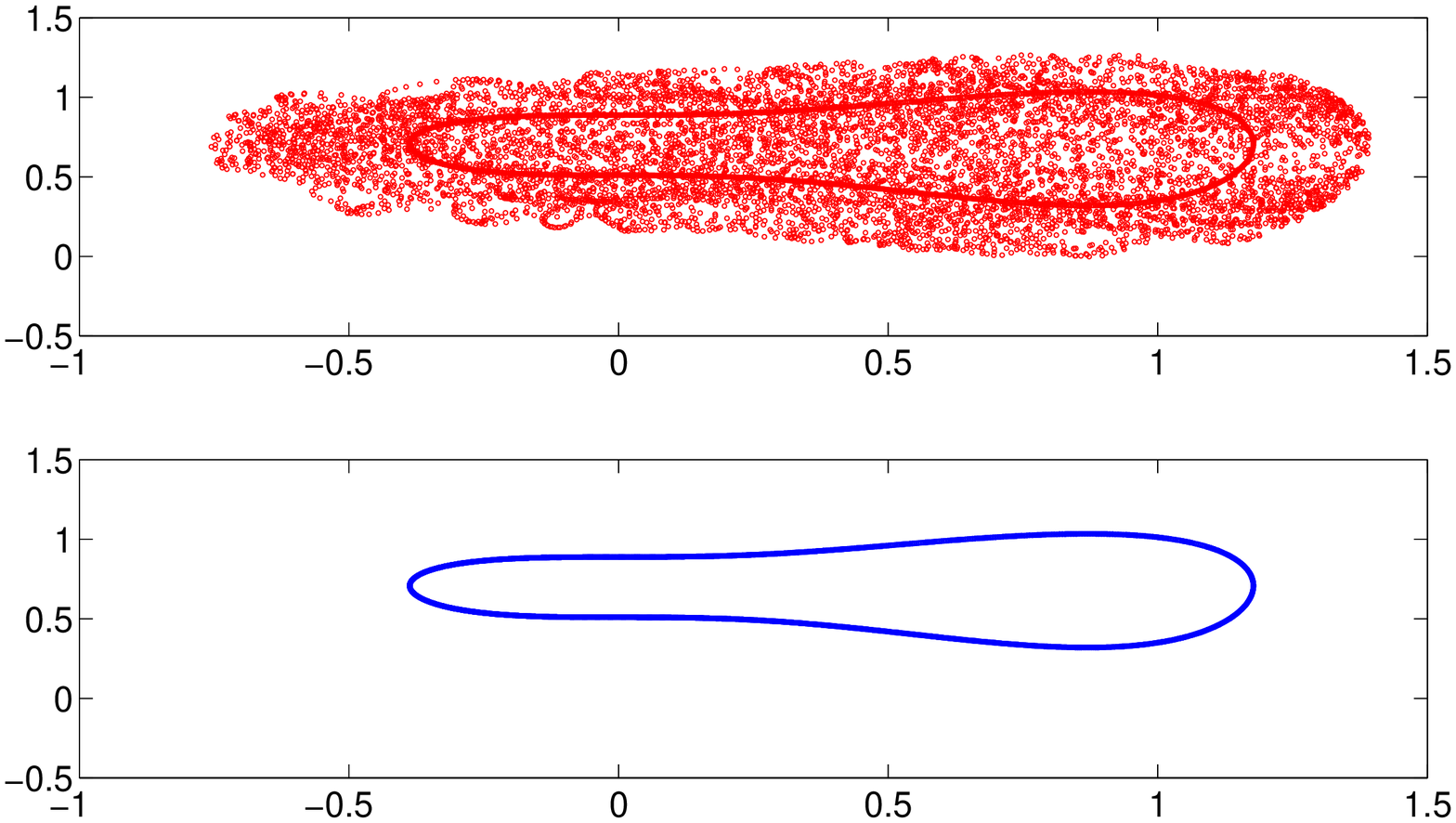}
\hspace*{-.6cm}
\includegraphics[width=6.7cm, height=5cm]{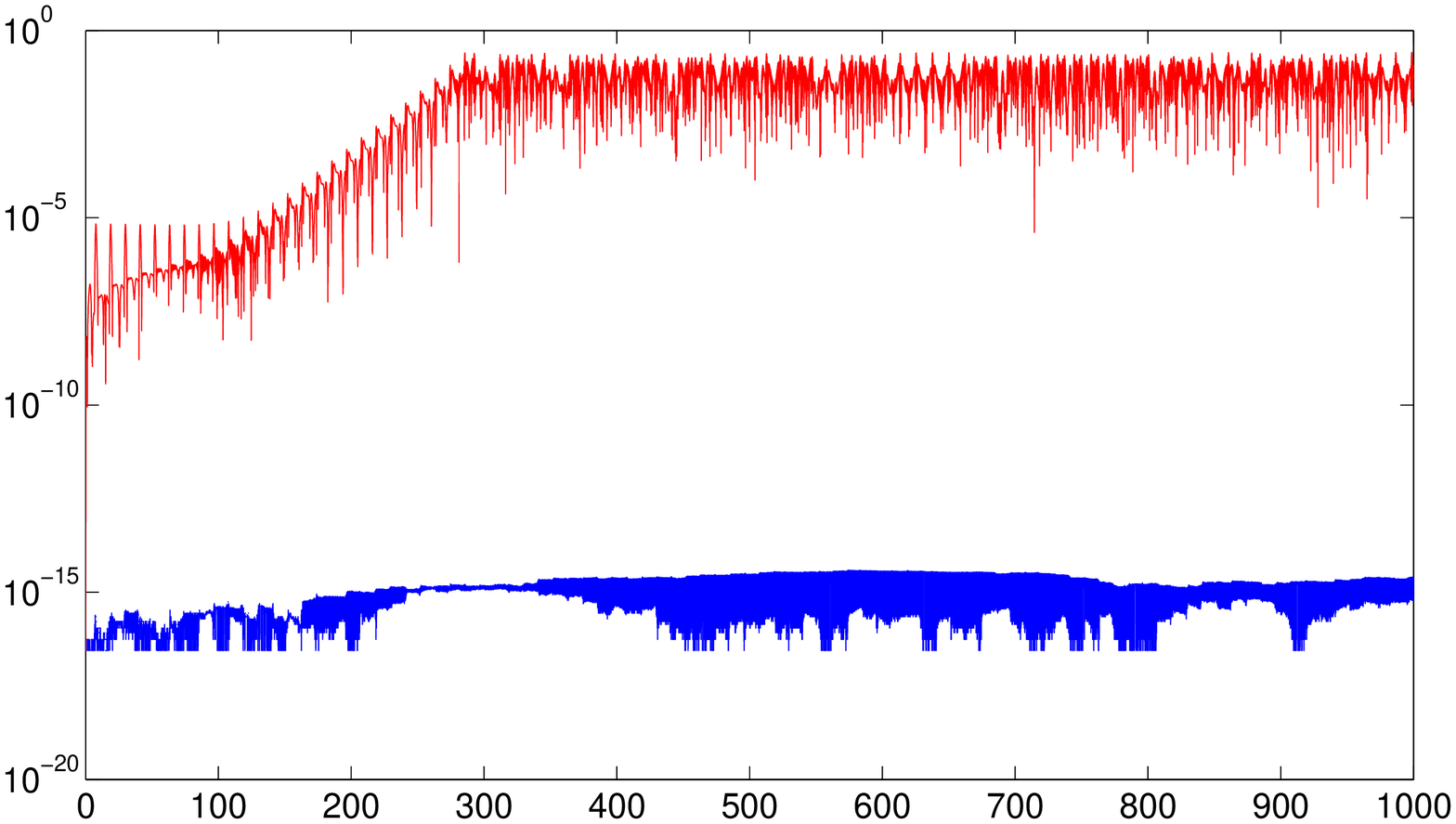}
\end{center}
\vspace*{-.5cm} \caption{Left pictures: numerical solutions in the
phase plane computed by method $M'_7$ (top picture) and $M_7$
(bottom picture). Right picture: error in the numerical Hamiltonian
function $|H(y_n)-H(y_0)|$ produced by the two methods. Parameters:
initial condition $y_0=[0.2,\,0.5]$; stepsize $h=0.1$; integration
interval $[0, 1000]$.} \label{fhp_fig1}
\end{figure}

\begin{table}[h]
\label{tab2}{\footnotesize
\begin{center}
\begin{tabular}{c|c|c|c|c|c|}
\cline{2-6} & \multicolumn{5}{c|}{method $M_7$}  \\
\hline
\multicolumn{1}{|c|}{$h$}  & error  & order  & { $|H(y_N)-H(y_0)|$} & residual $r(y_N)$  & order of $r(y_N)$  \\
\hline
\multicolumn{1}{|c|}{$2^{-1}$} &   $4.47\cdot 10^{-2}$  & $     $   & $1.6\cdot 10^{-16}$ & $-1.21\cdot 10^{-03}$& $     $ \\[.1cm]
\multicolumn{1}{|c|}{$2^{-2}$} &   $7.38\cdot 10^{-4}$  & $5.920$   & $4.4\cdot 10^{-16}$ & $-3.23\cdot 10^{-06}$& $8.559$ \\[.1cm]
\multicolumn{1}{|c|}{$2^{-3}$} &   $3.90\cdot 10^{-5}$  & $4.243$   & $5.8\cdot 10^{-16}$ & $-2.15\cdot 10^{-08}$& $7.225$ \\[.1cm]
\multicolumn{1}{|c|}{$2^{-4}$} &   $2.39\cdot 10^{-6}$  & $4.027$   & $2.4\cdot 10^{-16}$ & $-6.61\cdot 10^{-10}$& $5.029$ \\[.1cm]
\multicolumn{1}{|c|}{$2^{-5}$} &   $1.49\cdot 10^{-7}$  & $4.007$   & $2.5\cdot 10^{-15}$ & $-2.03\cdot 10^{-11}$& $5.021$ \\[.1cm]
\multicolumn{1}{|c|}{$2^{-6}$} &   $9.27\cdot 10^{-9}$  & $4.002$   & $3.2\cdot 10^{-15}$ & $-6.27\cdot 10^{-13}$& $5.018$ \\[.1cm]
\multicolumn{1}{|c|}{$2^{-7}$} &   $5.77\cdot 10^{-10}$ & $4.006$   & $5.5\cdot 10^{-16}$ & $-2.00\cdot 10^{-14}$& $4.972$ \\[.1cm]
\multicolumn{1}{|c|}{$2^{-8}$} &   $3.16\cdot 10^{-11}$ & $4.188$   & $5.4\cdot 10^{-15}$ & $-5.36\cdot 10^{-16}$& $5.219$ \\
 \hline
\end{tabular}\end{center}}
\vspace*{.1cm} \caption{Performance of method $M_7$ applied to
problem \eqref{fhp}, with initial condition $y_0=[0.2,\,0.5]$, on
the time interval $[0, 250]$ for several values of the stepsize $h$,
as specified in the first column. The second and third columns
report the relative error in the last computed point $y_N$, $N=T/h$
and the corresponding order of convergence. Since the integrals
appearing in \eqref{nonlin_sys} are precisely computed by the
Lobatto quadrature formula with seven nodes, the error in the
numerical Hamiltonian $H(y_N)$ is zero up to machine precision. The
last two columns list the residual $r(y_N)$ defined in
\eqref{residual} and its order of convergence to zero.}
\end{table}

\subsection{Test problem 3}
We finally consider the non-polynomial Hamiltonian function
\begin{equation}
\label{kepler} H(q_1,q_2,p_1,p_2) =
\frac{1}2(p_1^2+p_2^2)-\frac{1}{\sqrt{q_1^2+q_2^2}}
\end{equation}
that defines the well known Kepler problem,  namely  the motion of
two masses under the action of their mutual gravitational
attraction. Taking as initial condition
\begin{equation}
\label{kepler0}
(q_1(0),q_2(0),p_1(0),p_2(0))=\pmatrix{cccc}1-e,&0,&0,
&\sqrt{\frac{1+e}{1-e}}\endpmatrix^T
\end{equation}
yields an elliptic periodic orbit of period $2\pi$ and eccentricity
$e\in[0,1)$. We have chosen $e=0.6$. Though the vector field fails
to be a polynomial in $q_1$ and $q_2$, we can plan to use a
sufficiently large number of quadrature nodes to discretize the
integrals in \eqref{nonlin_sys} so that the corresponding accuracy
is within the machine precision. Under this assumption, and taking
aside the effect of the floating point arithmetic, the computer will
make no difference between the conservative formulae
\eqref{nonlin_sys} and their discrete counterparts.

The left picture in Figure \ref{kepler_fig1} explains the above
argument. It reports the error $|H(y_n)-H(y_0)|$ in the Hamiltonian
function for various choices of the number of Lobatto nodes, and
precisely $k=3,\,5,\,7,\,9$. We see that the error decreases quickly
as the number of nodes is incremented and for $k=9$ it is within the
epsilon machine.\footnote{All tests were performed in Matlab using
double precision arithmetic.}

The use of finite arithmetic may sometimes cause a mild numerical
drift of the energy over long times, like the one shown in the upper
line in the right picture of Figure \ref{kepler_fig1}. This is due
to the fact that on a computer the numerical solution satisfy the
conservation relation $H(y_n)=H(y_0)$ up to machine precision times
the conditioning number of the nonlinear system that is to be solved
at each step.

To prevent the accumulation of roundoff errors we may apply a simple
and costless \textit{correction} technique  on the approximation
$y_n$ which consists in a single step of a gradient descent method
(see also \cite{BIS}). More precisely, the corrected solution
$y^*_n$ is defined by
\begin{equation}
\label{descent} y^*_n = y_n-\alpha \frac{\nabla H(y_n)}{||\nabla
H(y_n)||_2}, \qquad \mbox{with} ~
\alpha=\frac{H(y_n)-H(y_0)}{||\nabla H(y_n)||_2},
\end{equation}
which stems from choosing as $\alpha$ the value that minimizes the
linear part of the function $F(\alpha) = H(y_n-\alpha \frac{\nabla
H(y_n)}{||\nabla H(y_n)||_2})-H(y_0)$. The bottom line in the right
picture of Figure \ref{kepler_fig1} shows the energy conservation
property of the corrected solution.

\begin{figure}[h]
\begin{center}
\includegraphics[width=6.7cm, height=5cm]{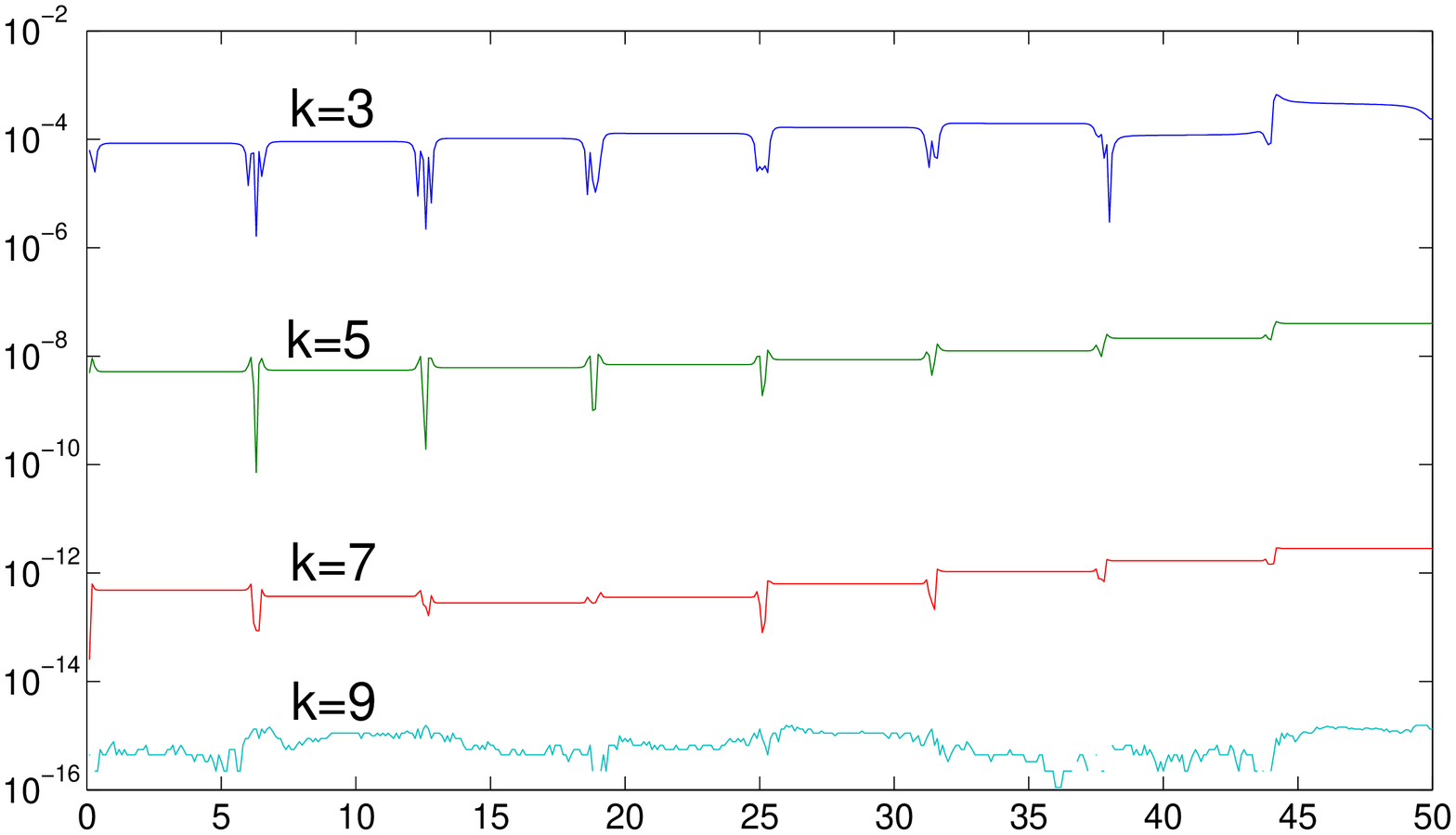}
\hspace*{-.6cm}
\includegraphics[width=6.7cm, height=5cm]{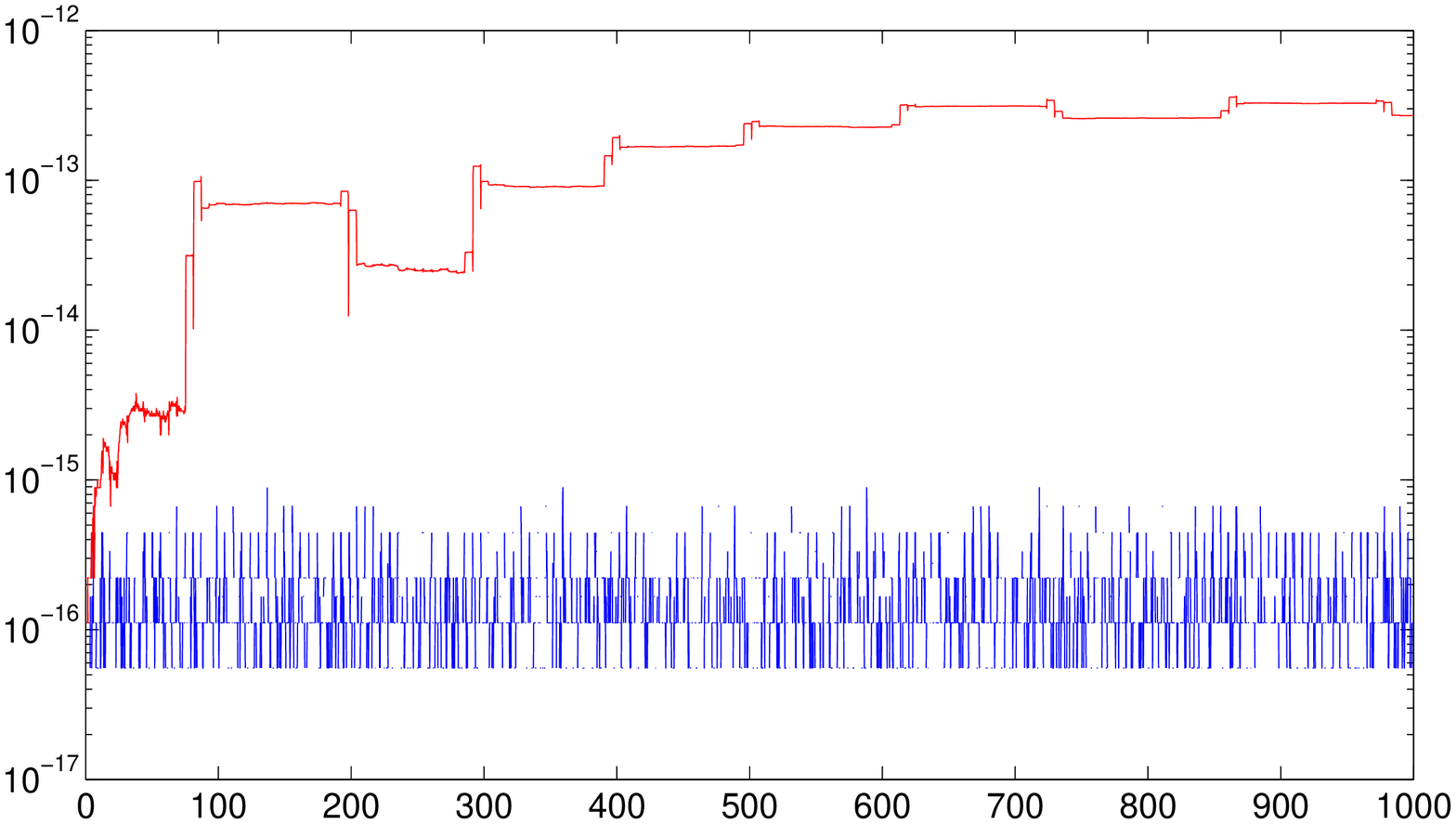}
\end{center}
\vspace*{-.5cm} \caption{Left picture. Error in the numerical
Hamiltonian function $|H(y_n)-H(y_0)|$ produced by methods $M_k$,
with $k=3,\,5,\,7,\,9$. Parameters: stepsize $h=0.05$, integration
interval $[0, 50]$. Right picture.  Roundoff errors may cause a
drift of the numerical Hamiltonian function (upper line) which can
be easily taken under control by coupling the method with a costless
correction procedure like the one described at \eqref{descent}.}
\label{kepler_fig1}
\end{figure}

\section{Conclusions}
We have derived a family of mono-implicit methods of order four with
energy-preserving properties. Each element in the family originates
from a limit formula and is defined by discretizing the integral
therein by means of a suitable quadrature scheme. This process
assures an exact energy conservation in the case where the
Hamiltonian function is a polynomial, or a conservation to within
machine precision in all other cases, as is also illustrated in the
numerical tests. Interestingly, each method may be conceived as a
$O(h^5)$ perturbation of a two-step linear method.


\begin{thebibliography}{10}

\bibitem{Bo} W.M.G. van Bokhoven, \textit{Efficient higher order implicit one-step
methods for integration of stiff differential equations}, BIT
\textbf{20}, 1 (1980), 34--43.

\bibitem{BIS} L.\,Brugnano, F.\,Iavernaro, T.\,Susca, \textit{ Hamiltonian BVMs (HBVMs):
implementation details and applications},  {\em AIP Conf. Proc.}
{\bf 1168} (2009), 723--726.


\bibitem{BIT0}  L. Brugnano, F. Iavernaro and  D. Trigiante,
 The Hamiltonian BVMs (HBVMs) Homepage, \url{arXiv:1002.2757}
 (URL:~\url{http://www.math.unifi.it/~brugnano/HBVM/}).

\bibitem{BIT1} L. Brugnano, F. Iavernaro and  D. Trigiante, \textit{Analisys
of Hamiltonian Boundary Value Methods (HBVMs): a class of
energy-preserving Runge-Kutta methods for the numerical solution of
polynomial Hamiltonian dynamical systems}, (2009) (submitted) (\url{
arXiv:0909.5659}).

\bibitem{BIT2} L. Brugnano, F. Iavernaro and  D. Trigiante,
\textit{Hamiltonian Boundary Value Methods (Energy Preserving
Discrete Line Integral Methods)}, { Jour. of Numer. Anal. Industr.
and Appl. Math.} {\bf 5}, 1--2 (2010), 17--37  (\url{
arXiv:0910.3621}).

\bibitem{BIT3} L.~Brugnano, F.~Iavernaro, D.~Trigiante,
\textit{The Lack of Continuity and the Role of Infinite and
Infnitesimal in Numerical Methods for ODEs: the Case of
Symplecticity}, Applied Mathematics and Computation (to appear),
\url{DOI: 10.1016/j.amc.2011.03.022}, (\url{arXiv:1010.4538}).


\bibitem{BrTr0} L.  Brugnano and D. Trigiante, \textit{Energy drift in the numerical
integration of Hamiltonian problems}, {Jour. of Numer. Anal.
Industr. and Appl. Math.} {\bf 4}, 3--4 (2009), 153-170.


\bibitem{BuChMu} K. Burrage, F.H. Chipman and P.H.  Muir,
 \textit{Order results for mono-implicit Runge--Kutta methods} SIAM J. Numer.
Anal. \textbf{31}, 3 (1994), 876--891.

\bibitem{Ca75} J.R. Cash, \textit{A Class of Implicit Runge--Kutta Methods
for the Numerical Integration of Stiff Ordinary  Differential
Equations}, J. ACM \textbf{22}, 4 (1975), 504--511.

\bibitem{CaSi} J.R. Cash and A. Singhal, \textit{Mono-implicit Runge--Kutta
formulae for the numerical integration of stiff differential
systems}, IMA J. Numer. Anal. \textbf{2}, 2 (1982), 211--227.


\bibitem{M2AN} E. Celledoni, R.I.  McLachlan, D. McLaren, B. Owren,
G.R.W. Quispel and W.M. Wright, \textit{Energy preserving
Runge-Kutta methods}, {M2AN} \textbf{43} (2009), 645--649.


\bibitem{CieRat10} J.L.  Cie\'sli\'nski and B. Ratkiewicz, \textit{Improving the accuracy of the
discrete gradient method in the one-dimensional case}, Phys. Rev. E
\textbf{81} (2010) 016704.

\bibitem{CieRat10a} J.L.  Cie\'sli\'nski and B. Ratkiewicz,
\textit{Energy-preserving numerical schemes of high accuracy for
one-dimensional Hamiltonian systems},  (\url{arXiv:1009.2738}).

\bibitem{FaHaPh}
E.~Faou, E.~Hairer and T.-L.~Pham, \emph{Energy conservation with
non-symplectic methods: examples and counter-examples}, BIT
Numerical Mathematics \textbf{44}  (2004),  699--709.

\bibitem{Fe} K. Feng, \emph{On difference schemes and symplectic geometry.
Proceedings of the 5-th Intern}, Symposium on differential geometry
\& differential equations, August 1984, Beijing (1985) 42--58.

\bibitem{GM}  Z.  Ge and  J.E.  Marsden, {\em Lie-Poisson Hamilton-Jacobi theory and
Lie-Poisson integrators}, Phys. Lett. A,  133 (1988),  134--139.

\bibitem{Gon96} O. Gonzales, \textit{Time integration and discrete
Hamiltonian systems}, J. Nonlinear Sci. {\bf 6} (1996), 449--467.


\bibitem{IP1} F. Iavernaro and B. Pace, \textit{$s$-Stage Trapezoidal Methods for the
Conservation of Hamiltonian Functions of Polynomial Type},  AIP
Conf. Proc. \textbf{936} (2007), 603--606.


\bibitem{IT3} F. Iavernaro and D. Trigiante, \textit{High-order symmetric schemes for
the energy conservation of polynomial Hamiltonian problems},
J.Numer. Anal. Ind. Appl. Math. {\bf 4}, 1-2 (2009), 87--101.


\bibitem{IxVB04} L.Gr. Ixaru and G. Vanden Berghe, \textit{Exponential fitting}, Kluwer,
Dordrecht 2004.

\bibitem{LeRe} B. Leimkuhler and S. Reich, \textit{Simulating Hamiltonian Dynamics},
Cambridge University Press, Cambridge, 2004.

\bibitem{McPe} R. I. McLachlan and  M. Perlmutter, \textit{Energy drift in reversible time
integration}, J. Phys. A {\bf 37},  45 (2004), 593--598.

\bibitem{McL99} R.I. McLachlan, G.R.W. Quispel and N. Robidoux,
\textit{Geometric integration using discrete gradients}, R. Soc.
Lond. Philos. Trans. Ser. A Math. Phys. Eng. Sci. {\bf 357}, (1999),
1021--1045.

\bibitem{QMc} G.R.W. Quispel and D.I. McLaren, \textit{A new class of energy-preserving
numerical integration methods}, J. Phys. A \textbf{41} (045206),
2008.

\bibitem{Ru} R.D. Ruth, \textit{A canonical integration technique}, IEEE Trans. Nuclear
Science \textbf{30},4 (1983) 2669--2671.

\bibitem{Si08}  T.E. Simos, \textit{High-order closed Newton-Cotes
trigonometrically-fitted formulae for long-time integration of
orbital problems}, Comput. Phys. Comm. \textbf{178} (2008),
199--207.


\bibitem{VB06} G. Vanden Berghe and M. Van Daele: \textit{Exponentially-fitted
St\"ormer/Verlet methods}, J. Numer. Anal. Ind. Appl. Math.
\textbf{1} (2006), 237--251.

\end{thebibliography}
\end{document}